\documentclass[a4paper,11pt]{amsart}
\usepackage[colorlinks, linkcolor=blue,anchorcolor=Periwinkle,
    citecolor=blue,urlcolor=Emerald]{hyperref}
\usepackage{graphicx}
\usepackage{amsmath}
\usepackage{amssymb}
\usepackage{oldgerm}
\usepackage[all]{xy}
\usepackage{tikz}
\usepackage{tikz-cd}

\newcommand{\End}{\operatorname{End}\nolimits}
\newcommand{\Hom}{\operatorname{Hom}\nolimits}
\newcommand{\RHom}{\operatorname{RHom}\nolimits}

\newtheorem{theo}{Theorem}[section]

\newtheorem{cor}[theo]{Corollary}

\newcommand{\ten}{\otimes}
\newcommand{\DMix}{\mathrm{DMix}}
\newcommand{\lten}{\overset{\mathbf{L}}{\ten}}

\newcommand{\iso}{\stackrel{_\sim}{\rightarrow}}

\newcommand{\ko}{\;\; ,}

\newcommand{\ALG}{\textbf{ALG}}
\newcommand{\Alg}{\textbf{Alg}}
\newcommand{\rep}{\mathrm{rep}}

\renewcommand{\phi}{\varphi}

\begin{document}
\baselineskip=15pt
\title[Derived invariance of the Tamarkin--Tsygan calculus]{Derived invariance of
the Tamarkin--Tsygan calculus of an algebra}

\author{Marco Antonio Armenta}
\address{CIMAT A. C. Guanajuato, M\'exico}
\address{IMAG, Univ Montpellier, CNRS, Montpellier, France.}

\email{drmarco@cimat.mx}

\author{Bernhard Keller}
\address{Universit\'e Paris Diderot -- Paris 7\\
    Sorbonne Universit\'e\\
    UFR de Math\'ematiques\\
    CNRS\\
   Institut de Math\'ematiques de Jussieu--Paris Rive Gauche, IMJ-PRG \\   
    B\^{a}timent Sophie Germain\\
    75205 Paris Cedex 13\\
    France
}
\email{bernhard.keller@imj-prg.fr}
\urladdr{https://webusers.imj-prg.fr/~bernhard.keller/}


\keywords{Derived category, Hochschild homology, Connes differential} \subjclass[2000]{}

\begin{abstract} We prove that derived equivalent algebras have isomorphic
differential calculi in the sense of Tamarkin--Tsygan.
\end{abstract}

\maketitle

\section{Introduction}
Let $k$ be a commutative ring and $A$ an associative $k$-algebra projective
as a module over $k$. We write $\otimes$ for the tensor product over $k$. We point out that all the constructions and proofs of this paper extend to small dg categories cofibrant
over $k$. The Hochschild homology $HH_\bullet(A)$ and cohomology $HH^\bullet(A)$ are derived invariants of $A$, see \cite{Happel88, Happel89, Rickard89a, Rickard91,
Zimmermann07}. Moreover, these $k$-modules come with operations, namely the cup product
\[
    \cup :HH^n(A) \otimes HH^m(A) \to HH^{n+m}(A),
\]
the Gerstenhaber bracket
\[
    [-,-]:HH^n(A) \otimes HH^m(A) \to HH^{n+m-1}(A),
\]
the cap product
\[
    \cap:HH_n(A) \otimes HH^m(A) \to HH_{n-m}(A)
\]
and Connes' differential
\[
    B:HH_n(A) \to HH_{n+1}(A),
\]
such that $B^2=0$ and 
\begin{equation} \label{eq:TamarkinTsygan}
 [B i_\alpha-(-1)^{|\alpha|}i_\alpha B, i_\beta] = i_{[\alpha,\beta]},
\end{equation}
where $i_\alpha(z)=(-1)^{|\alpha||z|} z \cap \alpha$. This is the first example \cite{GelfandDaletskiiTsygan89, TamarkinTsygan} of a \textit{differential calculus} or a \textit{Tamarkin-Tsygan calculus}, which is by definition a collection
\[
(\mathcal{H}^\bullet,\cup,[-,-],\mathcal{H}_\bullet, \cap, B),
\]
such that $(\mathcal{H}^\bullet,\cup,[-,-])$ is a Gerstenhaber algebra, the cap product $\cap$ endows $\mathcal{H}_\bullet$ with the structure of a graded Lie module over the Lie algebra $(\mathcal{H}^\bullet[1],\cup,[-,-])$ and the map $B:\mathcal{H}_n \to \mathcal{H}_{n+1}$ squares to zero and satisfies the equation~(\ref{eq:TamarkinTsygan}).
The Gerstenhaber algebra $(HH^\bullet(A),\cup,[-,-])$ has been proved to be a derived invariant \cite{Keller04, Keller03}. The cap product is also a derived invariant \cite{ArmentaKeller17}. In this work, we use an isomorphism induced from the cyclic functor \cite{Keller98} to prove derived invariance of Connes' differential and of the ISB-sequence. To obtain derived invariance of the differential calculus, we need to prove that this isomorphism equals the isomorphism between Hochschild homologies used in \cite{ArmentaKeller17} to prove derived invariance of the cap product.

\section{The cyclic functor}

Let $\Alg$ be the category whose objects are the associative dg (=differential graded)
$k$-algebras cofibrant over $k$ (i.e. `closed' in the sense of section~7.5 of \cite{Keller98}) and
whose morphisms are morphisms of dg $k$-algebras which do not necessarily preserve the unit.
Let $\rep(A,B)$ be the full subcategory of the derived category $D(A^{op}\ten B)$ whose
objects are the dg bimodules $X$ such that the restriction $X_B$ is compact in $D(B)$, i.e.
lies in the thick subcategory generated by the free module $B_B$.
Define $\ALG$ to be the category whose objects are those of $\Alg$ and whose 
morphisms from $A$ to $B$ are the isomorphism classes in $\rep(A,B)$. The composition of morphisms in $\ALG$ is given by the total derived tensor product \cite{Keller98}. The identity of 
$A$ is the isomorphism class of the bimodule ${_AA_A}$. There is a canonical functor 
$\Alg \to \ALG$ that associates to a morphism $f:A \to B$ the bimodule ${_fB_B}$ with 
underlying space $f(1)B$ and 
$A$-$B$-action given by $a.f(1)b.b'=f(a)bb'$.

Let $\Lambda$ be the dg algebra $k[\epsilon]/(\epsilon^2)$ where $|\epsilon|=-1$
and the differential vanishes. As in 
\cite{Kassel, Keller98}, we will identify the category of dg $\Lambda$-modules with the category of mixed complexes. Denote by $\DMix$ the derived category of dg $\Lambda$-modules. 
Let $C:\Alg \to \DMix$ be the \textit{cyclic functor} \cite{Keller98}, that is, the underlying dg $k$-module of $C(A)$ is the mapping cone over $(1-t)$ viewed as a morphism of complexes 
$(A^{\otimes \ast+1},b') \to (A^{\otimes \ast},b)$ and the first and second differentials of the mixed complex $C(A)$ are
\[
\left[ \begin{array}{cc}
    b & 1-t \\
    0 & -b'
\end{array} \right] 
\]
and 
\[ \left[ \begin{array}{cc}
    0 & 0 \\
    N & 0
\end{array} \right].
\]
Clearly, a dg algebra morphism $f:A \to B$ (even if it does not preserve the unit)
induces a morphism $C(f):C(A) \to C(B)$ of dg $\Lambda$-modules. 
Let $X$ be an object of $\rep(A,B)$. We assume, as we may, that $X$ is cofibrant
(i.e. `closed' in the sense of section~7.5 of \cite{Keller98}). This implies that $X_B$ is
cofibrant as a dg $B$-module and thus that morphism spaces in the derived category
with source $X_B$ are isomorphic to the corresponding morphism spaces in the
homotopy category. Consider the morphisms
\[
    \xymatrix{ A \ar[rr]^-{\alpha_X} & & \End_B(B\oplus X) & & B \ar[ll]_-{\beta_X} 
    }
\]
where $\End_B(B\oplus X)$ is the differential graded endomorphism algebra
of $B\oplus X$, the morphism $\alpha_X$ be given by the left action of $A$ on $X$ and $\beta_X$ is 
induced by the left action of $B$ on $B$. Note that these morphisms do not preserve the units.
The second author proved in \cite{Keller98} that $C(\beta_X)$ is invertible in $\DMix$ and defined $C(X)=C(\beta_X)^{-1} \circ C(\alpha_X)$.
We recall that $C$ is well defined on $\ALG$ and that this extension
of $C$ from $\Alg$ to $\ALG$ is unique by Theorem~2.4 of \cite{Keller98}. 

Let $X:A \to B$ be a morphism of $\ALG$ where $X$ is cofibrant. Put $X^\vee=\Hom_B(X,B)$.
We can choose morphisms 
$u_X: A \to X \lten_B X^\vee$ and $v_X:X^\vee \lten_B X \to B$ such that the following triangles commute
\[
\xymatrix{ X \ar[r]^-{u_X \otimes 1} \ar[rd]_-= & X \lten_B X^\vee \lten_A X \ar[d]^-{1 \otimes v_X} \\
& X \\
} \quad 
    \xymatrix{ X^\vee \ar[r]^-{1 \otimes u_X} \ar[rd]_-= & X^\vee \lten_A X \lten_B X^\vee \ar[d]^-{v_X \otimes 1} \\
& X^\vee. \\
}
\]
Then the functors 
\[
? \lten_{A^e} (X \otimes X^\vee) : D(A^e) \to D(B^e)
\]
and 
\[
?\lten_{B^e} (X^\vee \otimes X):D(B^e) \to D(A^e)
\]
form an adjoint pair. We will identify $X \lten_B X^\vee \iso (X \otimes X^\vee) \lten_{B^e} B$ and $X^\vee \lten_A X \iso (X^\vee \otimes X) \lten_{A^e} A$, and still call $u_X$ and $v_X$ the same morphisms when composed with this identification. Since $k$ is a commutative ring, the tensor product over $k$ is symmetric. We will denote the symmetry isomorphism by $\tau$. Let
$D(k)$ denote the derived category of $k$-modules. We define a functor $\psi:\Alg \to D(k)$ by putting $\psi(A)=A \lten_{A^e} A$, and $\psi(f)=f\otimes f$ for a morphism $f:A \to B$. There is a canonical quasi-isomorphism $\psi(A) \to \varphi(A)$ 
for any algebra $A$, where $\varphi(A)$ is the underlying complex of $C(A)$.
Therefore, the functors $\varphi$ and $\psi$ take isomorphic values on objects. We now
define $\psi$ on morphisms of $\ALG$ as follows: Let $X$ be a cofibrant object of $\rep(A,B)$.
Define $\psi(X)$ to be the composition
\begin{eqnarray*}
A \lten_{A^e} A & \to & A \lten_{A^e} X \otimes X^\vee \lten_{B^e} B \\
 & \iso & B \lten_{B^e} X^\vee \otimes X \lten_{A^e} A \\
 & \to & B \lten_{B^e} B. \\
\end{eqnarray*}
That is, we put $\psi(X)=(1 \otimes v_X)\circ\tau\circ(1 \otimes u_X)$. 

\begin{theo} \label{thm:main}
The assignments $A \mapsto \psi(A)$, $X \mapsto \psi(X)$ define a functor on $\ALG$
that extends the functor $\varphi:\Alg \to D(k)$.
\end{theo}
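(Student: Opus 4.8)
The plan is to verify the three functor axioms for $\psi$ on $\ALG$ — well-definedness on morphisms, preservation of the identity, and compatibility with composition — and then to check that restriction along the canonical functor $\Alg \to \ALG$ recovers $\varphi$ up to the canonical natural isomorphism. I would begin with well-definedness. Since $X$ is cofibrant and $X_B$ is compact, $X$ is dualizable as a right $B$-module, so that $X^\vee = \Hom_B(X,B)$ together with the chosen $u_X$ and $v_X$ constitutes adjunction data, the coevaluation and the evaluation, satisfying the two displayed triangle identities. For a fixed adjoint pair the unit and counit are determined up to canonical isomorphism, and an isomorphism $X \iso X'$ in $\rep(A,B)$ transports the whole package $(X^\vee,u_X,v_X)$ to an equivalent one. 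Since $\psi(X) = (1 \ten v_X)\circ\tau\circ(1 \ten u_X)$ is the trace associated to these data, and this trace is invariant under such transport, $\psi(X)$ depends only on the class of $X$ in $\ALG$.

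For the identity I would take $X = {}_AA_A$, $X^\vee = A$, with $u_A$ and $v_A$ the canonical isomorphisms $A \iso A \lten_A A$; the triangle identities then hold trivially, and a short computation reduces the composite $(1\ten v_A)\circ\tau\circ(1\ten u_A)$ to $\id_{\psi(A)}$. For the extension I would compute $\psi({}_fB_B)$ for an algebra morphism $f\colon A \to B$: here ${}_fB_B$ has an explicit dual and (co)evaluation induced by $f$, and unwinding the trace gives exactly $\psi({}_fB_B) = f \ten f$, the original value of $\psi$ on $\Alg$. Because the canonical quasi-isomorphisms $\psi(A) \to \varphi(A)$ are natural in $A$ — the naturality square relating $f \ten f$ to $\varphi(f)$ commutes — it follows that the extended functor restricts on the image of $\Alg \to \ALG$ to $\varphi$, which is the asserted extension property.

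The main step, and the one I expect to be the chief obstacle, is compatibility with composition: for $X\colon A \to B$ and $Y\colon B \to C$ I must show $\psi(X \lten_B Y) = \psi(Y)\circ\psi(X)$. Here I would invoke the canonical identification $(X \lten_B Y)^\vee \iso Y^\vee \lten_B X^\vee$, valid because $X$ and $Y$ are dualizable; under it the coevaluation of $X \lten_B Y$ factors as $u_X$ followed by insertion of $u_Y$ into the central $B$-factor, $A \xrightarrow{u_X} X \lten_B X^\vee \to X \lten_B (Y \lten_C Y^\vee) \lten_B X^\vee \iso (X \lten_B Y) \lten_C (X \lten_B Y)^\vee$, and dually for the evaluation. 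Writing $\psi(Y)\circ\psi(X)$ as the six-fold composite $(1\ten v_Y)\,\tau\,(1\ten u_Y)\,(1\ten v_X)\,\tau\,(1\ten u_X)$ and applying naturality of the symmetry $\tau$, I would move the two coevaluations to the source and the two evaluations to the target, thereby merging the two symmetries into the single symmetry $\tau$ for $X \lten_B Y$ and recognising the central portion as the composite coevaluation and evaluation just described. The triangle identities for $X$ and $Y$ are exactly what allow the intermediate copy of $\psi(B)$ to be absorbed, so that the total map coincides with $\psi(X \lten_B Y)$.

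The genuine difficulty in this last step is the sign and tensor-factor bookkeeping: the two symmetry isomorphisms carry Koszul signs, and one must verify that after rearrangement these combine into precisely the sign carried by $\tau$ for the composite duality, with no discrepancy. I would keep this under control by fixing cofibrant representatives throughout, so that every derived tensor product and every $\Hom$ is computed by its underlying dg formula and the whole argument reduces to the associativity and symmetry coherence of $\ten$ over $k$ together with the triangle identities; the identity and extension statements then reappear as degenerate instances of the same computation.
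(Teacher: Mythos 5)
Your proposal follows essentially the same route as the paper: compute $\psi({_fB_B})$ explicitly to obtain the extension property, and prove compatibility with composition by identifying $(X\lten_B Y)^\vee$ with $Y^\vee\lten_B X^\vee$, choosing the composite coevaluation $u_Z=(1\ten u_Y\ten 1)\circ u_X$ and evaluation $v_Z=v_Y\circ(1\ten v_X\ten 1)$, and rearranging by naturality of $\tau$ --- your ``merging of the two symmetries'' is carried out in the paper via a cyclic permutation $c$, a reordering isomorphism $\sigma$, and the observation that $\tau$ on $B\lten_{B^e}B$ is the identity. One small correction: in the composition step no triangle identity is actually used to ``absorb'' the intermediate copy of $\psi(B)$ --- the maps $1\ten v_X$ and $1\ten u_Y$ act on disjoint tensor slots and simply commute past one another, so after reordering the six-fold composite is literally $(1\ten v_Z)\circ\tau\circ(1\ten u_Z)$; the triangle identities enter only to guarantee that $(u_Z,v_Z)$ is again valid adjunction data, hence that $\psi$ is independent of these choices, a point your well-definedness paragraph already covers.
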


\begin{cor} The functors $\varphi$ and $\psi: \ALG \to D(k)$ are isomorphic.
\end{cor}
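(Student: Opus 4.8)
The plan is to deduce the corollary from Theorem~\ref{thm:main} together with the uniqueness of extensions to $\ALG$. By construction $\varphi\colon\ALG\to D(k)$ is the cyclic functor $C$ followed by the forgetful functor $\DMix\to D(k)$, so it is one extension to $\ALG$ of the original $\varphi\colon\Alg\to D(k)$. By Theorem~\ref{thm:main}, $\psi$ is a second functor on $\ALG$ whose restriction to $\Alg$ is the same $\varphi$, the identification being the canonical quasi-isomorphism $\psi(A)\to\varphi(A)$ recalled above. Since $\psi$ is defined on all of $\ALG$ it necessarily sends the morphisms invertible there to isomorphisms of $D(k)$, so it is an extension of the kind to which the uniqueness statement of \cite{Keller98}, Theorem~2.4, applies; the two extensions are therefore isomorphic, which is exactly the assertion.

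To make the isomorphism explicit I would take $\eta_A\colon\psi(A)\to\varphi(A)$ to be the canonical quasi-isomorphism on objects and check naturality in two stages. For a genuine dg algebra map $f\colon A\to B$ one has $\psi(f)=f\otimes f$ while $\varphi(f)$ is the underlying map of $C(f)$, and I would verify $\eta_B\circ\psi(f)=\varphi(f)\circ\eta_A$ directly from the definition of the comparison map. For a general morphism $X\colon A\to B$ of $\ALG$, I would use that $X$ factors in $\ALG$ as $\beta_X^{-1}\circ\alpha_X$ through $E=\End_B(B\oplus X)$, with $\beta_X$ invertible because $B\oplus X$ and $B$ generate the same thick subcategory of $D(B)$. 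Functoriality of $\psi$ (Theorem~\ref{thm:main}) and of $\varphi$ then yields $\psi(X)=\psi(\beta_X)^{-1}\psi(\alpha_X)$ and $\varphi(X)=\varphi(\beta_X)^{-1}\varphi(\alpha_X)$, the latter being precisely the defining equality $C(X)=C(\beta_X)^{-1}\circ C(\alpha_X)$; naturality on $X$ then follows formally by inverting the already-commuting square for $\beta_X$ and composing it with the square for $\alpha_X$.

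The only step with real content beyond this formal bootstrapping is the base case: the naturality, for honest dg algebra maps, of the canonical quasi-isomorphism comparing $A\lten_{A^e}A$ with the underlying complex of $C(A)$. This is where I expect whatever work remains to be concentrated, though it is essentially built into the construction of the cyclic functor in \cite{Keller98}; once it is granted every remaining verification is purely formal, and the uniqueness route of the first paragraph bypasses it altogether, so I would present that route as the actual proof and regard the explicit $\eta$ as an optional refinement.
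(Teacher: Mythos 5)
Your first paragraph is exactly the paper's proof: the corollary follows immediately from Theorem~\ref{thm:main} together with the uniqueness of the extension from $\Alg$ to $\ALG$ given by Theorem~2.4 of \cite{Keller98} (and the remark following it). The explicit construction of $\eta$ in your later paragraphs is unnecessary, as you yourself note, and the paper omits it.
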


\begin{proof}[Proof of the Corollary] This is immediate from Theorem~2.4 of \cite{Keller98} and the remark following it.
\end{proof}

\begin{proof}[Proof of the Theorem] For ease of notation, we write $\ten$ and $\Hom$
instead of $\lten$ and $\RHom$.
Let $f:A \to B$ be a morphism of $\Alg$. The associated morphism in $\ALG$ is $X={_fB_B}$. 
Note that $X^\vee={_BB_f}$. The diagrams
\[
\xymatrix{ A \ten_{A^e} ( _fB \otimes_B B_f) \ar[rd]^-{\simeq} \ar[d]_-{\simeq} & \\
A \ten_{A^e} ( _fB \otimes B_f) \ten_{B^e} B \ar[r]_-{\simeq} & A \ten_{A^e} {_fB_f} }
\]
and
\[
\xymatrix{ A \ten_{A^e} ( _fB \otimes B_f) \ten_{B^e} B \ar[rr]_-{\simeq} \ar[d]_-{\tau} & & A \ten_{A^e} {_fB_f} \ar[d]^-{\tau} \\
B \ten_{B^e} B_f \otimes {_fB} \ten_{A^e} A \ar[rr]^-{\simeq} & & {_fB_f} \ten_{A^e} A
}
\]
are commutative. Since
\[
\xymatrix{ {_fB_f} \ten_{A^e} A \ar[r]^{\tau} \ar[d]_-{1 \otimes f}   & 
						A \ten_{A^e} {_fB_f} \ar[d]^-{f\otimes 1}  \\
{B} \ten_{B^e} {B}   \ar[r]^-\tau & B\ten_{B^e} B\\
}
\]
is also commutative and the bottom morphism equals the identity, 
we get that $\psi({_fB_B})$ is the morphism induced by $f$ from $A \ten_{A^e} A$ to $B\ten_{B^e} B$. Therefore $\psi({_fB_B})=\varphi({_fB_B})$.
Let $X:A\to B$ and $Y:B \to C$ be morphisms in $\ALG$.
We have canonical isomorphisms
\begin{align*}
\Hom_C(Y,C) \ten_B \Hom_B(X,B)   & \iso \Hom_B(X,\Hom_C(Y,C)) \\
                            & \iso \Hom_C(X\ten_B Y, C).
\end{align*}
Whence the identification
\[
(X\ten_B Y)^\vee = Y^\vee\ten_B X^\vee.
\]
Put $Z=X\ten_B Y$. For $u_Z$, we choose the composition 
\[
\xymatrix{
A\ar[r]^-{u_X} & X \ten_B X^\vee \ar[rr]^-{1\ten u_Y\ten 1} & &
X\ten_B Y \ten_C \ten Y^\vee \ten_B X^\vee}
\]
and for $v_Z$ the composition
\[
\xymatrix{
(Y^\vee \ten_B X^\vee) \ten_A (X \ten_B Y) \ar[rr]^-{1\ten v_X \ten 1} & &
Y^\vee\ten_B Y \ar[r]^-{v_Y} & C}.
\]
By definition, the composition $\psi(Y)\circ \psi(X)$ is the composition of
$(1\ten v_Y)\circ \tau \circ (1\ten u_Y)$ with $(1\ten v_X)\circ \tau\circ (1\ten u_X)$.
We first examine the composition $(1\ten u_Y)\circ (1\ten v_X)$:
\[
\xymatrix{
B \ten_{B^e}(X^\vee \ten X)\ten_{A^e} A \ar[r]^-{1\ten v_X} & B\ten_{B^e} B \ar[r]^-{1\ten u_Y} &
B\ten_{B^e}(Y\ten Y^\vee) \ten_{C^e} C}
\]
Clearly, the following square is commutative
\[
\xymatrix{
B\ten_{B^e}(X^\vee\ten X) \ten_{A^e} A \ar[r]^-c  \ar[d]_{1\ten v_X}
	& ((X^\vee \ten X)\ten_{A^e} A)\ten_{B^e} B \ar[d]^{v_X\ten 1} \\
B\ten_{B^e} B \ar[r]^-\tau & B\ten_{B^e} B\ko}
\]
where $c$ is the obvious cyclic permutation. Notice that 
\[
\tau: B\ten_{B^e} B \to B\ten_{B^e}B
\]
equals the identity. Thus, we have $1\ten u_Y=(1\ten u_Y)\circ \tau$ and
\[
(1\ten u_Y)\circ (1\ten v_X) = (1\ten u_Y)\circ \tau \circ (1\ten v_X) =
(1\ten u_Y) \circ (v_X\ten 1) \circ c.
\]
Let $\sigma$
\[
((X^\vee\ten X)\ten_{A^e} A) \ten_{B^e} (Y\ten Y^\vee) \ten_{C^e} C \iso
A \ten_{A^e}(X\ten_B Y) \ten (Y^\vee \ten_B X^\vee) \ten_{C^e} C
\]
be the natural isomorphism given by reordering the factors. Then we have
$\psi(Y)\circ \psi(X)=f\circ g$, where $f=\sigma\circ (1\ten u_Y) \circ c\circ \tau \circ (1\ten u_X)$
and $g=( v_Y \ten 1) \circ \tau \circ (v_X\ten 1)\circ \sigma^{-1}$. It is not hard to see
that $f$ equals $1\ten u_Z$ and $g$ equals $(1\ten v_Z)\circ \tau$. Intuitively, the reason
is that given the available data, there is only one way to go from $A\ten_{A^e}A$ to
\[
A \ten_{A^e}(X\ten_B Y) \ten (Y^\vee \ten_B X^\vee) \ten_{C^e} C
\]
and only one way to go from here to $C\ten_{C^e} C$. It follows that
$\psi(Y)\circ\psi(X)=\psi(Z)$.
\end{proof}

\section{Derived invariance}

Let $A$ and $B$ be derived equivalent algebras and $X$ a cofibrant object of
$\rep(A,B)$ such that $?\lten_A X: D(A) \to D(B)$ is an equivalence. Then $C(X)$
is an isomorphism of $\DMix$ and $\varphi(X)$ an isomorphism of $D(k)$.
There is a canonical short exact sequence of dg $\Lambda$-modules
\[
    0 \to k[1] \to \Lambda \to k \to 0
\]
giving rise to a triangle
\[
\xymatrix{
k[1] \ar[r]^-{B'} & \Lambda \ar[r]^I & k \ar[r]^-S & k[2]}.
\]

We apply the isomorphism of functors $?\lten_\Lambda C(A) \iso ?\lten_\Lambda C(B) $ 
to this triangle to get an isomorphism of triangles in $D(k)$, where
we recall that $\phi(A)$ is the underlying complex of $C(A)$
\[
\xymatrix{k[1] \lten_\Lambda C(A) \ar[r]^-{B'} \ar[d]_{\cong} & \phi(A) \ar[r]^-{I} \ar[d]_{\phi(X)} & k \lten_\Lambda C(A) \ar[r]^-{S} \ar[d]_{\cong} & k[2] \lten_\Lambda C(A) \ar[d]_{\cong} \\
k[1] \lten_\Lambda C(B) \ar[r]^-{B'} & \phi(B) \ar[r]^-{I} & k \lten_\Lambda C(B) \ar[r]^-{S} & k[2] \lten_\Lambda C(B)
}
\]
Taking homology and identifying $H_j(k\lten_\Lambda C(A))= HC_j(A)$ as in \cite{Kassel}, 
gives an isomorphism of the ISB-sequences of $A$ and $B$,
\[
    \xymatrix{ \cdots \ar[r] & HC_{n-1}(A) \ar[r]^-{B'_{n-1}} \ar[d]_-{\cong} & HH_n(A) \ar[r]^-{I_{n}} \ar[d]^-{HH_n(X)} & HC_n(A) \ar[r]^-{S_{n}} \ar[d]_-{\cong} & HC_{n-2}(A) \ar[r] \ar[d]_-{\cong} & \cdots \\
    \cdots \ar[r] & HC_{n-1}(B) \ar[r]^-{B'_{n-1}} & HH_n(B) \ar[r]^-{I_{n}} & HC_n(B) \ar[r]^-{S_{n}} & HC_{n-2}(B) \ar[r] & \cdots\ko
    }
\]
where $HH_n(X)$ is the map induced by $\phi(X)$. In terms of the differential calculus, Connes' differential is the map
\[
    B_n:HH_n(A) \to HH_{n+1}(A),
\]
given by $B_n=B'_n I_n$. This shows that $B_n$ is derived invariant via 
$HH_n(X)$. By Theorem~\ref{thm:main}, the map $HH_n(X)$ is equal
to the map induced by $\psi(X)$ used in the proof of the derived
invariance of the cap product \cite{ArmentaKeller17}. Therefore, we
get the following

\begin{theo}
The differential calculus of an algebra is a derived invariant.
\end{theo}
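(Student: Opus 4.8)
The plan is to assemble the three invariance statements that are now available -- for the Gerstenhaber algebra on cohomology, for the cap product, and for Connes' differential -- into a single pair of compatible isomorphisms, and then observe that the defining relation of a Tamarkin--Tsygan calculus transports automatically. Given derived equivalent algebras $A$ and $B$, I would fix a cofibrant object $X$ of $\rep(A,B)$ such that $?\lten_A X:D(A)\to D(B)$ is an equivalence, so that $\phi(X)$ is an isomorphism of $D(k)$ and $C(X)$ an isomorphism of $\DMix$. This single datum $X$ produces at once an isomorphism of Gerstenhaber algebras $\theta:HH^\bullet(A)\iso HH^\bullet(B)$ from \cite{Keller03, Keller04}, an isomorphism on homology $\Theta:=HH_\bullet(X):HH_\bullet(A)\iso HH_\bullet(B)$, and the compatibility of $\Theta$ with $B$ coming from the isomorphism of ISB-sequences built in the previous section.

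First I would record that $\theta$ is compatible with both $\cup$ and $[-,-]$, so that the cohomological part $(HH^\bullet,\cup,[-,-])$ is preserved. Next I would invoke \cite{ArmentaKeller17} to see that the homology isomorphism induced by $\psi(X)$ intertwines the cap products, i.e. $\Theta(z\cap\alpha)=\Theta(z)\cap\theta(\alpha)$, so that $\Theta$ is an isomorphism of graded Lie modules over $(HH^\bullet[1],[-,-])$. Finally I would read off from the commuting ladder of ISB-sequences that $\Theta B_n=B_n\Theta$ and that $\Theta$ respects $B^2=0$.

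The crucial step -- and the precise reason Theorem~\ref{thm:main} was proved -- is to check that the isomorphism witnessing invariance of the cap product and the isomorphism witnessing invariance of Connes' differential are literally the same map on $HH_\bullet$. The former is realized by $\psi(X)$ and the latter by $\phi(X)$ through the cyclic functor; Theorem~\ref{thm:main} identifies these two functors up to isomorphism, hence a single $\Theta$ realizes both compatibilities simultaneously. I expect this identification to be the only real obstacle: were the two homology isomorphisms distinct, there would be no coherent way to transport a structure, such as equation~(\ref{eq:TamarkinTsygan}), that mixes $\cap$ and $B$ together.

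With the pair $(\theta,\Theta)$ now compatible with $\cup$, $[-,-]$, $\cap$, and $B$ all at once, I would conclude by transporting the defining equation~(\ref{eq:TamarkinTsygan}). Since $i_\alpha$ is built from the cap product and the operator $Bi_\alpha-(-1)^{|\alpha|}i_\alpha B$ is built from $\cap$ together with $B$, and since the bracket on the right-hand side is preserved by $\theta$, the identity $[Bi_\alpha-(-1)^{|\alpha|}i_\alpha B,\,i_\beta]=i_{[\alpha,\beta]}$ holds for $B$ if and only if it holds for $A$. Thus the whole collection $(HH^\bullet,\cup,[-,-],HH_\bullet,\cap,B)$ is carried isomorphically from $A$ to $B$, and the differential calculus is a derived invariant. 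Once Theorem~\ref{thm:main} is in hand, all remaining verifications are formal.
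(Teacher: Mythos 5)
Your proposal is correct and follows essentially the same route as the paper: derived invariance of the Gerstenhaber structure and of the cap product are imported from the cited works, invariance of $B$ is read off the isomorphism of ISB-sequences, and Theorem~\ref{thm:main} is used exactly as you say, to identify the homology isomorphism induced by $\psi(X)$ (witnessing cap-product invariance) with the one induced by $\phi(X)$ via the cyclic functor (witnessing invariance of $B$), so that a single pair of isomorphisms transports the whole calculus and equation~(\ref{eq:TamarkinTsygan}).
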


\bibliographystyle{amsplain}
\bibliography{references}
\newpage
\end{document}